\documentclass[12pt]{article}
\usepackage{geometry,amsthm,amssymb,amsmath,enumerate,float,cite,algorithm2e,verbatim,tikz}
\geometry{a4paper,left=2.5cm,right=2.5cm, top=2cm, bottom=2cm}
\usetikzlibrary{positioning,arrows}
\tikzset{
  state/.style={circle,draw,minimum size=6ex},
  arrow/.style={-latex, shorten >=1ex, shorten <=1ex}}

\newtheorem{theorem}{Theorem}
\newtheorem{conjecture}{Conjecture}

\title{Domination versus edge domination}
\author{
Julien Baste$^{1,}$\footnote{Funded by the Deutsche Forschungsgemeinschaft (DFG, German Research Foundation) - 388217545.} \and
Maximilian F\"{u}rst$^1$ \and
Michael A. Henning$^2$ \and
Elena Mohr$^1$ \and
Dieter Rautenbach$^1$}
\date{}

\begin{document}

\maketitle

{\small
\begin{center}
$^1$
Institute of Optimization and Operations Research, Ulm University, Germany,
\texttt{\{julien.baste,maximilian.fuerst,elena.mohr,dieter.rautenbach\}@uni-ulm.de}\\[3mm]
$^2$
Department of Mathematics and Applied Mathematics, University of Johannesburg,
Auckland Park, 2006, South Africa,
\texttt{mahenning@uj.ac.za}
\end{center}
}

\begin{abstract}
We propose the conjecture that the domination number $\gamma(G)$
of a $\Delta$-regular graph $G$ with $\Delta\geq 1$
is always at most its edge domination number $\gamma_e(G)$,
which coincides with the domination number of its line graph.
We prove that
$\gamma(G)\leq \left(1+\frac{2(\Delta-1)}{\Delta 2^{\Delta}}\right)\gamma_e(G)$
for general $\Delta\geq 1$,
and
$\gamma(G)\leq \left(\frac{7}{6}-\frac{1}{204}\right)\gamma_e(G)$
for $\Delta=3$.
Furthermore, we verify our conjecture for cubic claw-free graphs.
\end{abstract}
{\small
\begin{tabular}{lp{13cm}}
{\bf Keywords:} & Domination; edge domination; minimum maximal matching\\
{\bf MSC 2010:} & 05C69, 05C70
\end{tabular}
}

\section{Introduction}

We consider finite, simple, and undirected graphs, and use standard terminology.
Let $G$ be a graph.
A set $D$ of vertices of $G$ is a {\it dominating set} in $G$
if every vertex in $V(G)\setminus D$ has a neighbor in $D$,
and the {\it domination number} $\gamma(G)$ of $G$
is the minimum cardinality of a dominating set in $G$.
For a set $M$ of edges of $G$,
let $V(M)$ denote the set of vertices of $G$
that are incident with an edge in $M$.
The set $M$ is a {\it matching} in $G$ if the edges in $M$
are pairwise disjoint, that is, $|V(M)|=2|M|$.
A matching $M$ in $G$ is {\it maximal} if it is maximal with respect to inclusion,
that is, the set $V(G)\setminus V(M)$ is independent.
Let the {\it edge domination number} $\gamma_e(G)$ of $G$
be the minimum size of a maximal matching in $G$.
A maximal matching in $G$ of size $\gamma_e(G)$ is a {\it minimum maximal matching}.

A natural connection between the domination number
and the edge domination number of a graph $G$
becomes apparent when considering the line graph $L(G)$ of $G$.
Since a maximal matching $M$ in $G$
is a maximal independent set in $L(G)$,
the edge domination number $\gamma_e(G)$ of $G$
equals the independent domination number $i(L(G))$ of $L(G)$.
Since $L(G)$ is always claw-free,
and since the independent domination number
equals the domination number in claw-free graphs \cite{alla},
$\gamma_e(G)$ actually
equals the domination number $\gamma(L(G))$ of $L(G)$.
While the domination number \cite{hahesl}
and the edge domination number \cite{yaga},
especially with respect to computational hardness
and algorithmic approximability \cite{caek,cafukopa,chch,golera,hoki,scvi},
have been studied extensively for a long time,
little seems to be known about their relation.
For regular graphs, we conjecture the following:

\begin{conjecture}\label{conjecture1}
If $G$ is a $\Delta$-regular graph with $\Delta\geq 1$,
then $\gamma(G)\leq\gamma_e(G)$.
\end{conjecture}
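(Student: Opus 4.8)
The plan is to connect a minimum maximal matching to a dominating set directly. Fix a minimum maximal matching $M$ of $G$ with $|M|=\gamma_e(G)=:m$, write $e_i=u_iv_i$ for its edges, and set $R=V(G)\setminus V(M)$. By maximality of $M$ the set $R$ is independent, so every vertex of $R$ has all $\Delta$ of its neighbours in $V(M)$. The crucial observation is that if $D$ is any \emph{transversal} of $M$, i.e.\ $D$ contains exactly one endpoint of each $e_i$, then $|D|=m$ and $D$ already dominates $V(M)$: the unchosen endpoint of $e_i$ is adjacent to the chosen endpoint of $e_i$. Hence the only vertices a transversal can fail to dominate lie in $R$, and the task reduces to choosing the transversal so as to dominate as much of $R$ as possible. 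A double-counting bound will also be needed: there are exactly $\Delta|R|$ edges between $R$ and $V(M)$, and each of the $2m$ vertices of $V(M)$ contributes at most $\Delta-1$ non-matching incidences, so $|R|\le\frac{2(\Delta-1)}{\Delta}m$.

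Next I would make the choice of transversal probabilistic: for each $e_i$ put one endpoint into $D$ independently and uniformly at random. For $r\in R$, if $r$ has both endpoints of some $e_i$ as neighbours it is dominated for free; otherwise its $\Delta$ neighbours lie in $\Delta$ distinct matching edges, and $r$ is undominated with probability exactly $2^{-\Delta}$. Thus the expected number of undominated vertices of $R$ is at most $|R|/2^{\Delta}$, and adding these vertices to $D$ yields a dominating set of expected size at most $m+|R|/2^{\Delta}\le\big(1+\frac{2(\Delta-1)}{\Delta 2^{\Delta}}\big)m$, matching the general bound in the abstract. For $\Delta=3$ the same scheme gives $\frac{7}{6}m$, and to shave the constant I would exploit that two vertices of $R$ that are simultaneously ``bad'' constrain overlapping sets of matching edges: a Lov\'asz-Local-Lemma-flavoured alteration argument, or a short local-exchange case analysis on clusters of bad $R$-vertices, should recover the improvement to $\big(\frac{7}{6}-\frac{1}{204}\big)m$.

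To reach the exact conjectured inequality $\gamma(G)\le\gamma_e(G)$ one must avoid paying for bad $R$-vertices at all. The plan would be to show that whenever every transversal of $M$ leaves some vertex of $R$ undominated, $G$ admits a local modification that either (i) produces a dominating set of size $<m$ by trading two matching endpoints in $D$ for a single vertex of $R$ that meets many matching edges, or (ii) produces a maximal matching of size $<m$ by rerouting $M$ along a short alternating structure through $R$, contradicting the minimality of $M$; the small cases $\Delta=1$ ($G$ a perfect matching, both parameters equal to the number of edges) and $\Delta=2$ ($G$ a union of cycles) are handled directly. The main obstacle is precisely step (i)--(ii): a forced bad $R$-vertex need not admit a clean local repair, and repairs can cascade and interfere with one another, so controlling them globally is delicate — which is presumably why, beyond the cubic claw-free case verified in the paper, the conjecture remains open.
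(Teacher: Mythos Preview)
The statement is a \emph{conjecture}, and the paper does not prove it; it only establishes the weaker Theorems~1--3 and explicitly leaves the conjecture open. You recognise this yourself in your final paragraph, so your proposal is not actually a proof attempt for the conjecture but rather a (largely accurate) reconstruction of the paper's partial progress.

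Concretely: your probabilistic transversal argument is exactly the paper's proof of Theorem~1, including the double count $|R|\le \frac{2(\Delta-1)}{\Delta}m$ and the first-moment bound. For the cubic improvement your guess of ``LLL-flavoured alteration or local-exchange case analysis'' is off target: the paper instead \emph{couples} the random choices on certain pairs of matching edges and then \emph{derandomises} selected triples, with a careful bookkeeping of how many vertices of $R$ each operation affects; this is rather different in flavour from LLL. Your plan for the full conjecture via local repairs (trading endpoints, rerouting $M$) is close in spirit to the paper's proof of Theorem~3, which does use an exchange argument along an alternating sequence---but that proof leans essentially on claw-freeness at every step, and as you correctly concede, no such argument is known for general $\Delta$-regular graphs. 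In short: there is no gap to name because you do not claim a proof; your reconstruction of Theorem~1 is spot on, your sketch for Theorem~2 misses the actual mechanism, and the conjecture itself remains open in the paper as in your write-up.
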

The conjecture is trivial for $\Delta\leq 2$,
and fails for non-regular graphs,
see Figure \ref{fig1}.
As pointed out by Felix Joos \cite{jo}, for $\Delta\geq 13$,
Conjecture \ref{conjecture1} follows by combining the known results
$\gamma(G)\leq \frac{(1+\ln(\Delta+1))n}{\Delta+1}$ (cf. \cite{alsp})
and
$\gamma_e(G)\geq \frac{\Delta n}{4\Delta-2}$ (cf.~(\ref{e1}) below),
that is, it is interesting for small values of $\Delta$ only.
Furthermore, he observed that the union of two triangles 
plus a perfect matching shows that Conjecture \ref{conjecture1} 
is tight for $\Delta=3$.
\begin{figure}[H]
\begin{center}
\begin{tikzpicture}

\node[fill, circle, inner sep=1.3pt] (v1) at (0,0) {};
\node[fill, circle, inner sep=1.3pt] (v2) at (1,0) {};
\node[fill, circle, inner sep=1.3pt] (v3) at (-0.5,0.5) {};

\node[fill, circle, inner sep=1.3pt] (v4) at (-.5,-0.5) {};
\node[fill, circle, inner sep=1.3pt] (v5) at (1.5,0.5) {};
\node[fill, circle, inner sep=1.3pt] (v6) at (1.5,-0.5) {};

\draw (v1) -- (v2);
\draw (v1) -- (v3);
\draw (v1) -- (v4);
\draw (v2) -- (v5);
\draw (v2) -- (v6);
\end{tikzpicture}
\end{center}
\caption{A non-regular graph $G$ with $\gamma(G)=2>1=\gamma_e(G)$.}\label{fig1}
\end{figure}
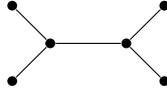
Our contributions
are three results related to Conjecture \ref{conjecture1}.
A simple probabilistic argument implies a weak version of Conjecture \ref{conjecture1},
which, for $\Delta\leq 12$,
is better than the above-mentioned consequence of \cite{alsp} and (\ref{e1}).
\begin{theorem}\label{theorem1}
If $G$ is a $\Delta$-regular graph with $\Delta\geq 1$,
then $\gamma(G)\leq \left(1+\frac{2(\Delta-1)}{\Delta 2^{\Delta}}\right)\gamma_e(G)$.
\end{theorem}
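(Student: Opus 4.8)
First I would fix a minimum maximal matching $M$ of $G$, so that $|M| = \gamma_e(G)$ and, by maximality, the set $I := V(G) \setminus V(M)$ is independent. The plan is to produce a dominating set by choosing, for each edge of $M$, exactly one endpoint, and then repairing the few vertices that this leaves undominated; the size of the repair will be controlled in expectation and then compared against $\gamma_e(G)$ via the order bound $\gamma_e(G) \geq \frac{\Delta n}{4\Delta-2}$, where $n = |V(G)|$ (inequality~(\ref{e1}); it follows at once by counting the edges between $V(M)$ and $I$, since every vertex of $I$ sends all $\Delta$ of its edges to $V(M)$ while every vertex of $V(M)$ sends at most $\Delta-1$ edges to $I$, its matching partner lying in $V(M)$).

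Concretely, for each edge $uv \in M$ I would pick one of $u,v$ uniformly at random, independently over the edges of $M$, and let $S$ be the resulting set of chosen endpoints, so that $|S| = |M|$ with probability $1$. The key structural observation is that \emph{every} vertex of $V(M)$ is dominated by $S$ no matter how the coins fall: if $v \in V(M)$ is matched by $uv \in M$, then one of $v,u$ lies in $S$, so $v$ either lies in $S$ or has a neighbour in $S$. Hence the only vertices that can fail to be dominated lie in the independent set $I$. For $v \in I$ all $\Delta$ neighbours of $v$ lie in $V(M)$; if two of them are matched to each other by an edge of $M$ then $v$ is dominated with certainty, and otherwise the $\Delta$ neighbours lie in $\Delta$ distinct edges of $M$, so $v$ is undominated only if on each of these edges the chosen endpoint is the non-neighbour of $v$, an event of probability $2^{-\Delta}$. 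Therefore the expected number of undominated vertices is at most $2^{-\Delta}|I| = 2^{-\Delta}(n - 2|M|)$.

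Fixing an outcome of the random process attaining at most this expectation, I would let $U$ be the set of vertices not dominated by $S$, so that $|U| \leq 2^{-\Delta}(n-2|M|)$, and take $D := S \cup U$, which is a dominating set of $G$ with $|D| \leq |M| + 2^{-\Delta}(n - 2|M|)$. Finally, substituting $n \leq \frac{4\Delta-2}{\Delta}|M|$ from~(\ref{e1}) gives $n - 2|M| \leq \frac{2(\Delta-1)}{\Delta}|M|$, hence
\[
\gamma(G) \leq |D| \leq \left(1 + \frac{2(\Delta-1)}{\Delta 2^{\Delta}}\right)|M| = \left(1 + \frac{2(\Delta-1)}{\Delta 2^{\Delta}}\right)\gamma_e(G).
\]

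The argument is short and I do not expect a genuine obstacle; the two points that need a little care are the observation that $V(M)$ is automatically dominated (so that the randomness only ever threatens the independent set $I$, which is precisely what lets the exponent in $2^{-\Delta}$ be the full degree rather than something weaker), and the routine bookkeeping converting the vertex-count bound~(\ref{e1}) into a bound in terms of $\gamma_e(G)$. One should also record the degenerate case $\Delta = 1$, where $I = \emptyset$, $D = S$, and the inequality reduces to $\gamma(G) \leq \gamma_e(G)$, consistently with the claim.
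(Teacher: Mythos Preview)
Your proposal is correct and follows essentially the same argument as the paper: fix a minimum maximal matching, select one endpoint per matching edge uniformly at random, bound the expected number of undominated vertices in $V(G)\setminus V(M)$ by $2^{-\Delta}(n-2\gamma_e(G))$, apply the first moment method, and finish via the edge-count inequality~(\ref{e1}). The only cosmetic differences are that you spell out explicitly why $V(M)$ is automatically dominated and you record the trivial case $\Delta=1$; the paper leaves both implicit.
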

For cubic graphs, Theorem \ref{theorem1} implies $\gamma(G)\leq \frac{7}{6}\gamma_e(G)$,
which we improve with our next result.
Even though the improvement is rather small,
we believe that it is interesting
especially because of the approach used in its proof.

\begin{theorem}\label{theorem2}
If $G$ is a cubic graph,
then $\gamma(G)\leq \left(\frac{7}{6}-\frac{1}{204}\right)\gamma_e(G)$.
\end{theorem}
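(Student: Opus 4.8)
\medskip
\noindent\textbf{Proof proposal.}
The plan is to fix a minimum maximal matching $M$ of $G$, so that $m:=|M|=\gamma_e(G)$ and $I:=V(G)\setminus V(M)$ is independent, and to assemble a dominating set from one endpoint of each edge of $M$ plus a small correction. Two bookkeeping identities set the stage. Let $f$ be the number of edges of $G$ with both endpoints in $V(M)$ that do not belong to $M$; since every edge meeting $I$ leaves $I$, summing degrees over $V(M)$ gives $6m=2(m+f)+3|I|$, that is, $3|I|=4m-2f$. Call a vertex $w\in I$ \emph{good} if two of its three neighbours are the two endpoints of a common edge of $M$; as $w$ has degree $3$ there is at most one such edge, the set $I$ splits into the good vertices and the set $I_3$ of vertices whose three neighbours lie on three distinct edges of $M$, and writing $g$ for the number of good vertices we have $|I_3|=|I|-g$.

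For the probabilistic step, pick independently for each edge $uv\in M$ one of $u,v$ uniformly at random, call the resulting $m$-set $D_0$, and let $U\subseteq I$ be the vertices not dominated by $D_0$; then $D_0\cup U$ dominates $G$. A good vertex has a neighbour among the two endpoints of some edge of $M$ no matter which one we keep, hence is always dominated, so $U\subseteq I_3$; and for $w\in I_3$ the three relevant edges are distinct, so $w$ is left undominated with probability exactly $(1/2)^3=1/8$. Thus $\mathbb{E}\big[|D_0\cup U|\big]=m+|I_3|/8$, and the method of conditional expectations yields an actual dominating set of size at most $m+|I_3|/8$. Feeding in the two identities, $\gamma(G)\le m+\tfrac18\big(\tfrac{4m-2f}{3}-g\big)=\tfrac{28m-2f-3g}{24}$, and an easy computation shows $\tfrac{28m-2f-3g}{24}\le\big(\tfrac76-\tfrac1{204}\big)m$ exactly when $34f+51g\ge 2m$, i.e.\ when $|I_3|\le\tfrac{22}{17}m$. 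This disposes of the theorem whenever $|I_3|\le\tfrac{22}{17}m$; note the coefficient $204$ already comes from the $51g$ term here.

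It remains to handle $|I_3|>\tfrac{22}{17}m$. Since $|I_3|\le|I|\le\tfrac43m$ always, this forces $\tfrac{2f}{3}+g<\tfrac{2}{51}m$, so all but a tiny fraction of the edges of $M$ carry no chord and no good common neighbour; after contracting each edge of $M$, the bipartite graph between the contracted $M$-edges and $I$ is then essentially $(4,3)$-biregular. This near-extremal regime is the crux and, I expect, the main obstacle: there the crude correction ``add every undominated vertex'' is exactly $1/8$-lossy, so it matches Theorem~\ref{theorem1} and improves nothing, and one must instead show that in such graphs the endpoints can be chosen so that $|U|$ is strictly below $|I_3|/8$ by a $\Theta(m)$ amount. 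I would attack this by a local exchange argument on the chosen endpoints --- augmented by the move of replacing two undominated neighbours of a vertex $v\in V(M)$ by $v$ itself --- or by using the near-regular bipartite structure to build a dominating set of size at most $m+\tfrac{11}{68}m$ more directly, or by arguing that a graph this close to the extremal configuration must possess a minimum maximal matching carrying a chord or a good neighbour, contradicting $|I_3|>\tfrac{22}{17}m$ for that matching. Once the right structural statement is isolated, turning it into the stated constant is routine bookkeeping; proving that statement is where the difficulty lies.
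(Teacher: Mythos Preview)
Your setup and the first half are fine: the identity $3|I|=4m-2f$, the split $I=I_3\cup\{\text{good}\}$, and the bound $\gamma(G)\le m+|I_3|/8$ are exactly what the paper starts from, and your reduction to the inequality $34f+51g\ge 2m$ is correct arithmetic.

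The genuine gap is that the case $|I_3|>\tfrac{22}{17}m$ is not a residual technicality but the entire content of the theorem. When $f$ and $g$ are both tiny the crude bound $m+|I_3|/8$ really does give only $\tfrac{7}{6}m$, so something new is needed, and none of your three suggested attacks is carried out. The local-exchange idea (replacing two undominated neighbours of $v$ by $v$) does not by itself beat $|I_3|/8$; the ``near-regular bipartite'' structure does not immediately yield a better dominating set; and hoping for a different minimum maximal matching with a chord or a good vertex is not something you can force. What the paper actually does is quite different from any of these: it first \emph{couples} pairs of matching edges $uv,u'v'$ whenever more vertices of $I_3$ see them in the pattern $\{u,u'\}$ or $\{v,v'\}$ than in the crossed pattern, making the two random choices perfectly anti-correlated so that those vertices are deterministically dominated; then, crucially, once a maximal family of such pairs is fixed, any remaining vertex $z\in I_3$ with all three neighbours in uncoupled edges can have its triple \emph{derandomised} (always pick $z$'s neighbours), and the maximality of the coupling forces enough local structure around $z$ that this gains at least $1/8$ per triple while touching at most $34$ vertices of $I_3$. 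The combination of these two operations is what produces the uniform saving $\mathbb{E}[|B|]\le\tfrac{33}{272}(n-2m)\le\tfrac{11}{68}m$, and it works regardless of the size of $f$ and $g$. Your proposal is missing this mechanism entirely.
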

Finally, we show Conjecture \ref{conjecture1} for cubic claw-free graphs.

\begin{theorem}\label{theorem3}
If $G$ is a cubic claw-free graph,
then $\gamma(G)\leq \gamma_e(G)$.
\end{theorem}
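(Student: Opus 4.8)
The plan is to exploit the rigid local structure of cubic claw-free graphs. Since both $\gamma$ and $\gamma_e$ are additive over connected components (a maximal matching of $G$ is exactly a disjoint union of maximal matchings of the components), it suffices to treat a connected cubic claw-free graph $G$, and the case $G=K_4$ is trivial because $\gamma(K_4)=1<2=\gamma_e(K_4)$. So assume $G$ is connected and $G\neq K_4$. In any cubic claw-free graph every vertex lies in a triangle, since the three neighbours of a vertex cannot form an independent set; and a vertex lies in three triangles only when its closed neighbourhood induces $K_4$, which would force $G=K_4$. Hence every vertex of $G$ lies in exactly one or two triangles. If a vertex $v$ lies in two triangles, these share an edge at $v$ (as $v$ has only three neighbours), and a one-line check shows that $\{v\}\cup N(v)$ then induces a diamond, i.e.\ $K_4$ minus an edge, with $v$ and one other vertex as the two ``central'' vertices (those of degree $3$ inside the diamond).

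Building on this, the key step is to show that $V(G)$ partitions into \emph{units}, each of which induces either a diamond or a \emph{pure triangle} (a triangle none of whose vertices lies in any other triangle). I would assign to each vertex in two triangles the diamond $\{v\}\cup N(v)$; and to each vertex $v$ in a single triangle $T$ either the triangle $T$ itself, if the two vertices of $T$ other than $v$ have no common neighbour besides $v$, or else the diamond obtained by adjoining that common neighbour to $T$. A routine but slightly fiddly case analysis verifies that these assignments are consistent — the four vertices of a diamond are assigned the same diamond, the three vertices of a pure triangle the same triangle — so the units are well defined and pairwise disjoint. Writing $t$ and $d$ for the numbers of pure-triangle units and diamond units, we then have $n=3t+4d$. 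This structural analysis is where essentially all the work lies; alternatively, one could invoke a known structure theorem presenting connected cubic claw-free graphs as assembled from triangles and diamonds.

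Given the partition, both inequalities are immediate. For $\gamma(G)\le t+d$, choose one vertex from each unit: any vertex of a pure triangle, and one central vertex of each diamond. Each unit is then dominated from within (a triangle is complete, and a central vertex of a diamond is adjacent to the other three vertices of that diamond), so this set of $t+d$ vertices dominates $G$. For $\gamma_e(G)\ge t+d$, let $M$ be any maximal matching; then $V(G)\setminus V(M)$ is independent, so it meets each pure-triangle unit in at most one vertex and each diamond unit in at most two, whence $|V(G)\setminus V(M)|\le t+2d$ and therefore $|M|=\tfrac12|V(M)|\ge\tfrac12\big((3t+4d)-(t+2d)\big)=t+d$. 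Combining the two bounds gives $\gamma(G)\le t+d\le\gamma_e(G)$, as desired. The only genuine obstacle is the structural step of the second paragraph; once it is in place, nothing further requires more than a line.
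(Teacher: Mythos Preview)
Your argument is correct, and it takes a genuinely different route from the paper's. The paper does not use any structure theorem; instead it fixes a maximal matching $M$, selects one endpoint from each edge so as to minimise the set $B$ of vertices in $V(G)\setminus V(M)$ left without a neighbour in the selection, and then shows $B=\emptyset$ by an exchange argument: starting from a hypothetical $b\in B$, claw-freeness forces a chain of edges $u_iv_i\in M$ along which the selected endpoints can be swapped to strictly decrease $|B|$. Thus the paper produces, for every maximal matching, a dominating set of the same size, with no global decomposition in sight.

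Your approach replaces this local exchange argument by the global partition of $V(G)$ into pure triangles and diamonds, after which both bounds $\gamma(G)\le t+d$ and $\gamma_e(G)\ge t+d$ are one-liners. The trade-off is clear: the paper's proof is entirely self-contained and short, whereas yours front-loads the work into the structural step (which, as you note, is essentially the known description of connected cubic claw-free graphs). In return you get more: the inequality factors through the explicit quantity $t+d$, the lower bound $|M|\ge t+d$ holds for \emph{every} maximal matching, and the argument would transfer to any setting where the triangle/diamond decomposition is available. Both proofs in fact establish the slightly stronger statement that $\gamma(G)\le |M|$ for every maximal matching $M$, not just a minimum one.
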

All proofs are given in the following section.

\section{Proofs}
We begin with the simple probabilistic proof of Theorem \ref{theorem1},
which is also the basis for the proof of Theorem \ref{theorem2}.

\begin{proof}[Proof of Theorem \ref{theorem1}]
Let $M$ be a minimum maximal matching in $G$.
Since every vertex in $V(G)\setminus V(M)$ has $\Delta$ neighbors in $V(M)$,
and every vertex in $V(M)$ has at most $\Delta-1$ neighbors in $V(G)\setminus V(M)$,
we have
\begin{eqnarray}\label{e1}
\Delta(n-2\gamma_e(G))\leq 2(\Delta-1)\gamma_e(G),
\end{eqnarray}
where $n$ is the order of $G$.

Let the set $D$ arise by selecting,
for every edge in $M$,
one of the two incident vertices
independently at random with probability $1/2$.
Clearly, $|D|=\gamma_e(G)$.
If $u$ is a vertex in $V(G)\setminus V(M)$,
then $u$ has no neighbor in $D$
with probability at most $1/2^{\Delta}$.
Note that $u$ might be adjacent to both endpoints of some edge in $M$
in which case it always has a neighbor in $D$.
If $B$ is the set of vertices in $V(G)\setminus V(M)$
with no neighbor in $D$,
then linearity of expectation implies
$$\mathbb{E}[|B|]
=\sum\limits_{u\in V(G)\setminus V(M)}\mathbb{P}[u\in B]
\leq \frac{|V(G)\setminus V(M)|}{2^{\Delta}}
=\frac{n-2\gamma_e(G)}{2^{\Delta}}.$$
Since $D\cup B$ is a dominating set in $G$,
the first moment method implies
$$\gamma(G)\leq |D|+\mathbb{E}[|B|]
=\gamma_e(G)+\frac{n-2\gamma_e(G)}{2^{\Delta}}
\stackrel{(\ref{e1})}{\leq} \gamma_e(G)+\frac{2(\Delta-1)\gamma_e(G)}{\Delta 2^{\Delta}},$$
which completes the proof.
\end{proof}
The next proof arises by modifying the previous proof.

\begin{proof} [Proof of Theorem \ref{theorem2}]
Clearly, we may assume that $G$ is connected.
Let $M$ be a minimum maximal matching in $G$.
Let $R_0$ be the set of vertices from $V(G)\setminus V(M)$
that are adjacent to both endpoints of some edge in $M$,
and let $R$ be $(V(G)\setminus V(M))\setminus R_0$.
Also in this proof, we construct a random set $D$
containing exactly one vertex from every edge in $M$.
Note that every vertex from $R_0$ will always have a neighbor in $D$.
Again, let $B$ be the set of vertices in $R$ with no neighbor in $D$.
As before, we will use the estimate
$$\gamma(G)\leq \gamma_e(G)+\mathbb{E}[|B|]
=\gamma_e(G)+\sum_{u\in R}\mathbb{P}[u\in B].$$
Initially, we choose $D$ exactly as in the proof of Theorem \ref{theorem1},
which implies
$$\mathbb{E}[|B|]=\frac{|R|}{8}.$$
In order to obtain an improvement,
we iteratively modify the random choice of $D$
in such a way that $\mathbb{E}[|B|]$ becomes smaller.
We do this using two operations.
Each individual operation leads to some reduction of $\mathbb{E}[|B|]$,
and we ensure that all these reductions combine additively.
While the first operation leads to a reduction of $\mathbb{E}[|B|]$
regardless of additional structural properties of $G$,
our argument that the second operation leads to a reduction
is based on the assumption that the first operation
has been applied as often as possible.

The first operation is as follows.
\begin{itemize}
\item If there are two edges $uv$ and $u'v'$ in $M$ such that
the set $X$ of vertices $x$ in $R$ with
$$N_G(x)\cap \{ u,v,u',v'\}\in \big\{ \{ u,u'\},\{v,v'\}\big\}$$
is larger than
the set $Y$ of vertices $y$ in $R$ with
$$N_G(y)\cap \{ u,v,u',v'\}\in \big\{ \{ u,v'\}, \{v,u'\}\big\},$$
see Figure \ref{figcp},
then we {\it couple} the random choices for the pair $\{ uv, u'v'\}$
in such a way that $D$ contains $\{ u,v'\}$ with probability $1/2$
and $\{ u',v\}$ with probability $1/2$.
\end{itemize}
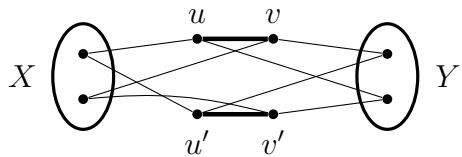
\begin{figure}[H]
\begin{center}
\begin{tikzpicture}

\node[fill, circle, inner sep=1.3pt, label=above:{$u$}] (u1) at (0,3) {};
\node[fill, circle, inner sep=1.3pt, label=above:{$v$}] (u2) at (1,3) {};
\node[fill, circle, inner sep=1.3pt, label=below:{$u'$}] (u3) at (0,2) {};
\node[fill, circle, inner sep=1.3pt, label=below:{$v'$}] (u4) at (1,2) {};

\draw[line width=0.6mm] (u1) -- (u2);
\draw[line width=0.6mm] (u4) -- (u3);

\node (h1) at (-2.3,2.5) {$X$};
\node (h2) at (3.3,2.5) {$Y$};

\node[fill, circle, inner sep=1.3pt] (z1) at (2.5,2.2) {};
\node[fill, circle, inner sep=1.3pt] (z2) at (2.5,2.8) {};
\node[fill, circle, inner sep=1.3pt] (z3) at (-1.5,2.8) {};
\node[fill, circle, inner sep=1.3pt] (z4) at (-1.5,2.2) {};

\draw (z1) -- (u1);
\draw (z1) -- (u4);
\draw (z2) -- (u2);
\draw (z2) -- (u3);

\draw (z3) -- (u1);
\draw (z3) -- (u3);
\draw (z4) to (u2);
\draw (z4) to[ bend left=10] (u4);

\draw[very thick]
        (-1.5,2.5) ellipse (0.4 and 0.7)
        (2.5,2.5) ellipse (0.4 and 0.7);

\end{tikzpicture}
\end{center}
\caption{The edges $uv$, $u'v'$ and the sets $X$ and $Y$.}
\label{figcp}
\end{figure}
The choice for the {\it coupled pair} $\{ uv,u'v'\}$
will remain independent of all other random choices
involved in the construction of $D$.
Furthermore, the two edges in a coupled pair
will not be involved in any other operation
modifying the choice of $D$.

Let $\pi$ be a coupled pair $\{ uv,u'v'\}$.
By construction, we obtain $\mathbb{P}[x\in B]=0$ for every vertex $x$ in $X$.
Now, consider a vertex $y$ in $Y$.
The two neighbors of $y$ in the two coupled edges
are either both in $D$ or both outside of $D$,
each with probability exactly $1/2$.
We will ensure that the third neighbor of $y$,
which is necessarily in a third edge from $M$,
will belong to $D$ still with probability exactly $1/2$.
By the independence mentioned above, we have $\mathbb{P}[y\in B]=1/4$.
Recall that, for the choice of $D$ as in the proof of Theorem \ref{theorem1},
each vertex from $X\cup Y$ belongs to $B$ with probability exactly $1/8$.
Hence, by coupling the pair $\pi$, the expected cardinality $\mathbb{E}[|B|]$ of $B$ is reduced by $(|X|-|Y|)/8$,
which is at least $1/8$.

The second operation is as follows.
\begin{itemize}
\item We select a suitable vertex $z$ from $R$
such that it has no neighbor in any of the coupled edges.
If the edges $u_1v_1$, $u_2v_2$, and $u_3v_3$ from $M$ are such that $u_1$, $u_2$, and $u_3$
are the three neighbors of $z$,
then we {\it derandomize} the selection for these three edges,
and $D$ will always contain $u_1$, $u_2$, and $u_3$.
We call $\{ u_1v_1,u_2v_2,u_3v_3\}$ a {\it derandomized triple with center $z$}.
\end{itemize}
We will first couple a maximal number of pairs,
and then derandomize triples one after the other
as long as possible.

Let $\tau=\{ u_1v_1,u_2v_2,u_3v_3\}$
be the next triple to be derandomized at some point.
Let $S(\tau)$ be the set of all vertices
that are incident with an edge $e$ from $M\setminus \tau$
such that some vertex in $R$ has a neighbor in $V(\tau)$
as well as in $e$, see Figure \ref{figstau}.

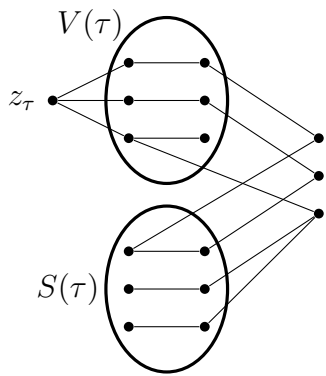
\begin{figure}[H]
\begin{center}
\begin{tikzpicture}

\node[fill, circle, inner sep=1.3pt, label=left:{$z_\tau$}] (z) at (-1,2.5) {};

\node[fill, circle, inner sep=1.3pt] (u1) at (0,2) {};
\node[fill, circle, inner sep=1.3pt] (u2) at (1,2) {};
\node[fill, circle, inner sep=1.3pt] (u3) at (0,2.5) {};
\node[fill, circle, inner sep=1.3pt] (u4) at (1,2.5) {};
\node[fill, circle, inner sep=1.3pt] (u5) at (0,3) {};
\node[fill, circle, inner sep=1.3pt] (u6) at (1,3) {};

\draw (u1) -- (u2);
\draw (u4) -- (u3);
\draw (u5) -- (u6);
\draw (z) -- (u1);
\draw (z) -- (u3);
\draw (z) -- (u5);

\node[fill, circle, inner sep=1.3pt] (w1) at (0,0.5) {};
\node[fill, circle, inner sep=1.3pt] (w2) at (1,0.5) {};
\node[fill, circle, inner sep=1.3pt] (w3) at (0,0) {};
\node[fill, circle, inner sep=1.3pt] (w4) at (1,0) {};
\node[fill, circle, inner sep=1.3pt] (w5) at (0,-.5) {};
\node[fill, circle, inner sep=1.3pt] (w6) at (1,-.5) {};

\draw (w1) -- (w2);
\draw (w4) -- (w3);
\draw (w5) -- (w6);

\node (h1) at (-0.8,0) {$S(\tau)$};
\node (h2) at (-0.5,3.5) {$V(\tau)$};

\node[fill, circle, inner sep=1.3pt] (z1) at (2.5,2) {};
\node[fill, circle, inner sep=1.3pt] (z2) at (2.5,1.5) {};
\node[fill, circle, inner sep=1.3pt] (z3) at (2.5,1) {};

\draw (z1) -- (u6);
\draw (z1) -- (w1);
\draw (z2) -- (u4);
\draw (z2) -- (w2);
\draw (z3) -- (u1);
\draw (z3) -- (w4);
\draw (z3) -- (w6);

\draw[very thick]
        (0.5,0) ellipse (0.8 and 1.1)
        (.5,2.5) ellipse (0.8 and 1.1);

\end{tikzpicture}
\end{center}
\caption{The set $S(\tau)$.}
\label{figstau}
\end{figure}
During all changes of the initial random choice of $D$ performed so far,
we ensure that the following property holds
just before we derandomize the triple $\tau$:
\begin{eqnarray}\label{eproptau}
\begin{minipage}{0.8\textwidth}
\it For every vertex $u$ in $R$ that has a neighbor in $V(\tau)\cup S(\tau)$,
the three neighbors of $u$ in $V(M)$
belong to $D$ independently with probability $1/2$.
\end{minipage}
\end{eqnarray}
All coupled pairs and derandomized triples will be disjoint.

For every edge in $M$ that does not belong to any coupled pair or derandomized triple,
we select the endpoint that is added to $D$ exactly as in the proof of Theorem \ref{theorem1},
that is, with probability $1/2$ independently of all other random choices
involved in the construction of $D$.

\bigskip

\noindent We fix a maximal collection ${\cal P}$
of pairwise disjoint coupled pairs $\pi_1,\ldots,\pi_p$.

Let $S_{\rm paired}$ be the set of the $4p$ vertices from $V(M)$
that are incident with some of the $2p$ paired edges.
Let $R_1$ be the set of vertices in $R$ with exactly one neighbor in $S_{\rm paired}$,
and let $R_2$ be the set of vertices in $R$ with at least two neighbors in $S_{\rm paired}$.
Note that the sets $R_0$, $R_1$, and $R_2$ are disjoint by definition.
Let $S_{\rm paired}'$ be the set of vertices from $V(M)\setminus S_{\rm paired}$
that are incident with an edge in $M$ that contains a neighbor of some vertex in $R_0\cup R_2$.
Let $R_3$ be the set of vertices from $R\setminus (R_0\cup R_1\cup R_2)$
that have a neighbor in $S_{\rm paired}'$.
All sets are illustrated in Figure \ref{fig3}.
Let $$R^{(1)}=R\setminus (R_0\cup R_1\cup R_2\cup R_3),$$
$r=|R|$, $r^{(1)}=|R^{(1)}|$, and $r_i=|R_i|$ for $i\in \{ 0,1,2,3\}$.
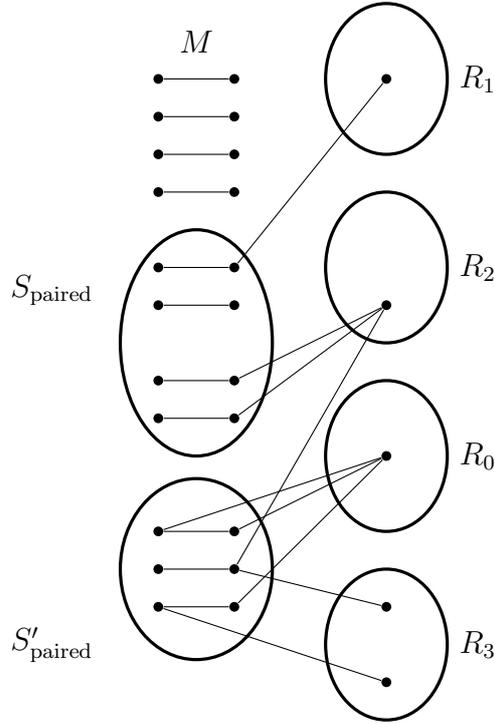
\begin{figure}[H]
\begin{center}
\begin{tikzpicture}

\node[fill, circle, inner sep=1.3pt] (v1) at (0,0) {};
\node[fill, circle, inner sep=1.3pt] (v2) at (1,0) {};
\node[fill, circle, inner sep=1.3pt] (v3) at (0,0.5) {};
\node[fill, circle, inner sep=1.3pt] (v4) at (1,.5) {};
\node[fill, circle, inner sep=1.3pt] (v5) at (0,-1) {};
\node[fill, circle, inner sep=1.3pt] (v6) at (1,-1) {};
\node[fill, circle, inner sep=1.3pt] (v7) at (0,-1.5) {};
\node[fill, circle, inner sep=1.3pt] (v8) at (1,-1.5) {};
\draw (v1) -- (v2);
\draw (v4) -- (v3);
\draw (v5) -- (v6);
\draw (v7) -- (v8);

\node[fill, circle, inner sep=1.3pt] (u1) at (0,2) {};
\node[fill, circle, inner sep=1.3pt] (u2) at (1,2) {};
\node[fill, circle, inner sep=1.3pt] (u3) at (0,2.5) {};
\node[fill, circle, inner sep=1.3pt] (u4) at (1,2.5) {};
\node[fill, circle, inner sep=1.3pt] (u5) at (0,3) {};
\node[fill, circle, inner sep=1.3pt] (u6) at (1,3) {};
\node[fill, circle, inner sep=1.3pt] (u7) at (0,1.5) {};
\node[fill, circle, inner sep=1.3pt] (u8) at (1,1.5) {};

\draw (u1) -- (u2);
\draw (u4) -- (u3);
\draw (u5) -- (u6);
\draw (u7) -- (u8);

\node[fill, circle, inner sep=1.3pt] (w1) at (0,-3.5) {};
\node[fill, circle, inner sep=1.3pt] (w2) at (1,-3.5) {};
\node[fill, circle, inner sep=1.3pt] (w3) at (0,-3) {};
\node[fill, circle, inner sep=1.3pt] (w4) at (1,-3) {};
\node[fill, circle, inner sep=1.3pt] (w5) at (0,-4) {};
\node[fill, circle, inner sep=1.3pt] (w6) at (1,-4) {};
\draw (w1) -- (w2);
\draw (w4) -- (w3);
\draw (w5) -- (w6);

\draw[very thick]
        (0.5,-0.5) ellipse (1 and 1.5)
        (0.5,-3.5) ellipse (1 and 1.2)
        (3,3) ellipse (.8 and 1)
        (3,-2) ellipse (.8 and 1)
        (3,0.5) ellipse (.8 and 1)
        (3,-4.5) ellipse (.8 and 1);
\node (h1) at (-1.4,0.2) {$S_{\rm paired}$};
\node (h2) at (-1.4,-4.5) {$S_{\rm paired}'$};
\node (h3) at (0.5,3.5) {$M$};
\node (h4) at (4.2,0.5) {$R_2$};
\node (h5) at (4.2,3) {$R_1$};
\node (h6) at (4.2,-2) {$R_0$};
\node (h7) at (4.2,-4.5) {$R_3$};

\node[fill, circle, inner sep=1.3pt] (z2) at (3,3) {};
\node[fill, circle, inner sep=1.3pt] (z4) at (3,0) {};
\node[fill, circle, inner sep=1.3pt] (z1) at (3,-2) {};
\node[fill, circle, inner sep=1.3pt] (z5) at (3,-5) {};
\node[fill, circle, inner sep=1.3pt] (z6) at (3,-4) {};
\draw (z1) -- (w3);
\draw (z1) -- (w6);
\draw (z1) -- (w4);
\draw (z2) -- (v4);
\draw (z4) -- (w2);
\draw (z4) -- (v6);
\draw (z4) -- (v8);
\draw (z5) -- (w5);
\draw (z6) -- (w2);

\end{tikzpicture}
\end{center}
\caption{The sets $S_{\rm paired}$, $S_{\rm paired}'$, $R_0$, $R_1$, $R_2$, and $R_3$}\label{fig3}
\end{figure}

Since $G$ has at most $8p$ edges leaving $S_{\rm paired}$,
we have $2r_2+r_1\leq 8p$,
which implies $r_1+7r_2\leq 28p$.
By definition, we obtain $|S_{\rm paired}'|\leq 4r_0+2r_2$.
Considering the number of edges leaving $S_{\rm paired}'$,
we obtain $r_3\leq 3|S_{\rm paired}'|\leq 12r_0+6r_2$.
Therefore,
\begin{eqnarray}
r^{(1)} & = & r-r_0-r_1-r_2-r_3\nonumber\\
& \geq & r-13r_0-r_1-7r_2\nonumber\\
& \geq & r-13r_0-28p.\label{er1}
\end{eqnarray}
Note that, only coupling the pairs $\pi_1,\ldots,\pi_p$
and not derandomizing any triple, we have
\begin{eqnarray}\label{e2}
\mathbb{E}[|B|]\leq \frac{|R|}{8}-\frac{p}{8}
=\frac{1}{8}\left(n-2\gamma_e(G)-r_0\right)-\frac{p}{8}.
\end{eqnarray}
If $r_0+p$ is large enough,
then this already yields the desired improvement.
Since we cannot guarantee this,
we now form derandomized triples one by one
with centers from $R^{(1)}$.
For every selected triple to be derandomized,
we remove suitable vertices from $R^{(1)}$
in order to ensure (\ref{eproptau}).
Suppose that we have already formed $t-1$ such derandomized triples
with centers $z_1,\ldots,z_{t-1}$,
then the center $z_t$ for the triple $\tau_t$
will be selected from $R^{(t)}$,
where $t$ is initially $1$,
and $R^{(t+1)}$ is obtained from $R^{(t)}$
by removing every vertex from $R^{(t)}$
that has a neighbor in $V(\tau_t)\cup S(\tau_t)$.
This ensures that
all coupled pairs and derandomized triples are disjoint
as well as (\ref{eproptau}).

Now, we analyze the reduction of $\mathbb{E}[|B|]$,
or rather the reduction of the upper bound on $\mathbb{E}[|B|]$ given in (\ref{e2}),
incurred by some derandomized triple $\tau_t$ with center $z_t$.
Let $e_1$, $e_2$, and $e_3$ in $M$ be such that $e_i=u_iv_i$ for $i\in [3]$
and $z_t$ is adjacent to $u_1$, $u_2$, and $u_3$,
that is, $\tau_t=\{ u_1v_1,u_2v_2,u_3v_3\}$.

We consider two cases.

\bigskip

\noindent {\bf Case 1} {\it Some vertex $z$ in $R$ distinct from $z_t$
has three neighbors in $V(\tau_t)$.}

\bigskip

\noindent First, suppose that $z$ is adjacent to $u_1$ and $u_2$.
In this case,
the pair $e_1$ and $e_2$ could be coupled and added to ${\cal P}$,
contradicting the choice of ${\cal P}$.
Next, suppose that $z$ is adjacent to $v_1$, $v_2$, and $v_3$.
Since the pair $e_1$ and $e_2$ cannot be coupled and added to ${\cal P}$,
there are two vertices $z'$ and $z''$ in $R$ such that
$z'$ is adjacent to $u_1$ and $v_2$,
and
$z''$ is adjacent to $u_2$ and $v_1$.
Since the pair $e_2$ and $e_3$ cannot be coupled and added to ${\cal P}$,
the vertex $z'$ is adjacent to $u_3$,
which implies the contradiction
that the pair $e_1$ and $e_3$ could be coupled and added to ${\cal P}$.

Hence, by symmetry,
we may assume that $z$ is adjacent to $u_1$, $v_2$, and $v_3$.
Since the pair $e_2$ and $e_3$ cannot be coupled and added to ${\cal P}$,
there are two vertices $z'$ and $z''$ in $R$ such that
$z'$ is adjacent to $u_2$ and $v_3$,
and
$z''$ is adjacent to $u_3$ and $v_2$.
If $z''$ is adjacent to $v_1$,
then, considering the pair $e_1$ and $e_2$,
it follows that $z'$ must be adjacent to $v_1$.
In this case, the connected graph $G$ has order $10$,
and $\{ u_1,u_2,u_3\}$ is a dominating set,
which implies the statement.
Hence, we may assume that $z''$ is not adjacent to $v_1$.
A symmetric argument implies that $z'$ is not adjacent to $v_1$.
See Figure \ref{figcase1} for an illustration.
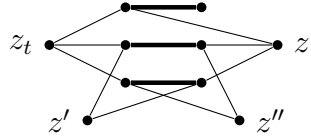
\begin{figure}[H]
\begin{center}
\begin{tikzpicture}

\node[fill, circle, inner sep=1.3pt, label=left:{$z_t$}] (z) at (-1,2.5) {};

\node[fill, circle, inner sep=1.3pt] (u1) at (0,3) {};
\node[fill, circle, inner sep=1.3pt] (u2) at (1,3) {};
\node[fill, circle, inner sep=1.3pt] (u3) at (0,2.5) {};
\node[fill, circle, inner sep=1.3pt] (u4) at (1,2.5) {};
\node[fill, circle, inner sep=1.3pt] (u5) at (0,2) {};
\node[fill, circle, inner sep=1.3pt] (u6) at (1,2) {};

\draw[line width=0.6mm] (u1) -- (u2);
\draw[line width=0.6mm] (u4) -- (u3);
\draw[line width=0.6mm] (u5) -- (u6);
\draw (z) -- (u1);
\draw (z) -- (u3);
\draw (z) -- (u5);

\node[fill, circle, inner sep=1.3pt, label=right:{$z$}] (z1) at (2,2.5) {};
\node[fill, circle, inner sep=1.3pt, label=left:{$z'$}] (z2) at (-.5,1.5) {};
\node[fill, circle, inner sep=1.3pt, label=right:{$z''$}] (z3) at (1.5,1.5) {};

\draw (z1) -- (u1);
\draw (z1) -- (u4);
\draw (z1) -- (u6);
\draw (z2) -- (u3);
\draw (z2) -- (u6);
\draw (z3) -- (u5);
\draw (z3) -- (u4);

%

\end{tikzpicture}
\end{center}
\caption{The edges in $\tau$ and the vertices $z_t$, $z$, $z'$, and $z''$.}
\label{figcase1}
\end{figure}
Our derandomized choice of adding always $u_1$, $u_2$, and $u_3$ to $D$ yields
$$\mathbb{P}[z_t \in B]
=\mathbb{P}[z\in B]
=\mathbb{P}[z'\in B]
=\mathbb{P}[z''\in B]=0.$$
Furthermore, property (\ref{eproptau}) implies
$\mathbb{P}[w\in B]=1/4$ for every neighbor $w$ of $v_1$ in $R$.
Since $v_1$ has at most two such neighbors,
derandomizing the triple $\tau_t$
additionally reduces the upper bound on $\mathbb{E}[|B|]$ given in (\ref{e2})
by at least $\frac{4}{8}-\frac{2}{8}=\frac{1}{4}$.
Since $z'$ and $z''$ both have at most one neighbor not in $V(\tau_t)$,
and at most two neighbors of $v_1$ in $R$
both have at most two neighbors not in $V(\tau_t)$,
we obtain $|S(\tau_t)|\leq 12$, and
\begin{eqnarray*}
|R^{(t+1)}|
& = |R^{(t)}|
& -\,\, \big|\big\{v\in R^{(t)}:v \mbox{ has a neighbor in }V(\tau_t)\cup S(\tau_t)\big\}\big|\\
& = |R^{(t)}|
& -\,\, \big|\big\{v\in R^{(t)}:v \mbox{ has a neighbor in }V(\tau_t)\big\}\big|\\
&&
-\,\, \big|\big\{v\in R^{(t)}:v \mbox{ has a neighbor in }S(\tau_t)
\mbox{ but no neighbor in }V(\tau_t)\big\}\big|\\
&\geq |R^{(t)}|&-\,\, 6-3\cdot 6\\
&=|R^{(t)}|&-\,\, 24.
\end{eqnarray*}

\bigskip

\noindent {\bf Case 2} {\it $z_t$ is the only vertex in $R$
that has three neighbors in $V(\tau_t)$.}

\bigskip

\noindent Since the pair $e_1$ and $e_2$ cannot be coupled and added to ${\cal P}$,
we may assume, by symmetry,
that there is a vertex $z$ in $R$ that is adjacent to $u_1$ and $v_2$.
Since the pair $e_2$ and $e_3$ cannot be coupled and added to ${\cal P}$,
we may assume
that there is a vertex $z'$ in $R$ such that
either
$z'$ is adjacent to $u_3$ and $v_2$
or
$z'$ is adjacent to $u_2$ and $v_3$.

If $z'$ is adjacent to $u_3$ and $v_2$,
then the assumption of Case 2 implies the contradiction
that the pair $e_1$ and $e_3$ can be coupled and added to ${\cal P}$.
Hence, we may assume that $z'$ is adjacent to $u_2$ and $v_3$.
Since the pair $e_1$ and $e_3$ cannot be coupled and added to ${\cal P}$,
there is a vertex $z''$ in $R$ adjacent to $u_3$ and $v_1$.
See Figure \ref{figcase2} for an illustration.
\begin{figure}[H]
\begin{center}
\begin{tikzpicture}

\node[fill, circle, inner sep=1.3pt, label=left:{$z_\tau$}] (z) at (-1,2.5) {};

\node[fill, circle, inner sep=1.3pt] (u1) at (0,3) {};
\node[fill, circle, inner sep=1.3pt] (u2) at (1,3) {};
\node[fill, circle, inner sep=1.3pt] (u3) at (0,2.5) {};
\node[fill, circle, inner sep=1.3pt] (u4) at (1,2.5) {};
\node[fill, circle, inner sep=1.3pt] (u5) at (0,2) {};
\node[fill, circle, inner sep=1.3pt] (u6) at (1,2) {};

\draw[line width=0.6mm] (u1) -- (u2);
\draw[line width=0.6mm] (u4) -- (u3);
\draw[line width=0.6mm] (u5) -- (u6);
\draw (z) -- (u1);
\draw (z) -- (u3);
\draw (z) -- (u5);

\node[fill, circle, inner sep=1.3pt, label=right:{$z$}] (z1) at (1.5,3.5) {};
\node[fill, circle, inner sep=1.3pt, label=left:{$z'$}] (z2) at (-.5,1.5) {};
\node[fill, circle, inner sep=1.3pt, label=right:{$z''$}] (z3) at (1.5,1.5) {};

\draw (z1) -- (u1);
\draw (z1) -- (u4);
\draw (z2) -- (u3);
\draw (z2) -- (u6);
\draw (z3) -- (u5);
\draw (z3) -- (u2);

%

\end{tikzpicture}
\end{center}
\caption{The edges in $\tau$ and the vertices $z_t$, $z$, $z'$, and $z''$.}
\label{figcase2}
\end{figure}
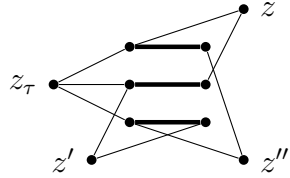
The choice of ${\cal P}$ implies that no vertex from $R^{(t)}$ distinct from $z_t$, $z$, $z'$, and $z''$
has two neighbors in $V(\tau_t)$.
Arguing as above, we obtain that
derandomizing the triple $\tau_t$
additionally reduces the upper bound on $\mathbb{E}[|B|]$ given in (\ref{e2})
by at least $\frac{4}{8}-\frac{3}{8}=\frac{1}{8}$.
Similarly as in Case 1, it follows that $|S(\tau_t)|\leq 18$, and that
$$|R^{(t+1)}|=|R^{(t)}\setminus\{v\in R:~v \mbox{ has a neighbor in }V(\tau_t)\cup S(\tau_t)\}| \geq |R^{(t)}| -7-3\cdot 9= |R^{(t)}|-34.$$

\bigskip

\noindent Since we derandomize as many triples as possible,
it follows that the number $t$ of derandomized triples satisfies
$$t\geq \frac{r^{(1)}}{34}\stackrel{(\ref{er1})}{\geq} \frac{r-13r_0-28p}{34},$$
and that
the joint reduction of the upper bound on $\mathbb{E}[|B|]$ given in (\ref{e2})
is at least
$$\frac{t}{8}\geq \frac{r-13r_0-28p}{272}.$$
Altogether, coupling all $p$ pairs in ${\cal P}$, and derandomizing the $t$ triples,
we obtain
\begin{eqnarray*}
\mathbb{E}[|B|]
&\leq & \frac{1}{8}\left(n-2\gamma_e(G)-r_0\right)-\frac{p}{8}-\frac{t}{8}\\
&\leq & \frac{1}{8}\left(n-2\gamma_e(G)-r_0-p\right)-\frac{r-13r_0-28p}{272}\\
& = & \frac{1}{8}\left(n-2\gamma_e(G)-r_0-p\right)-\frac{n-2\gamma_e(G)-r_0-13r_0-28p}{272}\\
& = &
\frac{33}{272}(n-2\gamma_e(G))
-\frac{5}{68}r_0
-\frac{3}{136}p\\
& \leq &
\frac{33}{272}(n-2\gamma_e(G))\\
& \stackrel{(\ref{e1})}{\leq} &
\frac{11}{68}\gamma_e(G).
\end{eqnarray*}
Therefore,
$$\gamma(G)\leq \gamma_e(G)+\mathbb{E}[|B|]
\leq \frac{79}{68}\gamma_e(G)
=\left(\frac{7}{6}-\frac{1}{204}\right)\gamma_e(G),$$
which completes the proof.
\end{proof}
We proceed to the final proof.

\begin{proof} [Proof of Theorem \ref{theorem3}]
Let $M$ be a minimum maximal matching in $G$.
Let the set $D$ of $|M|$ vertices intersecting each edge in $M$
be chosen such that the set
$B=\{ u\in V(G)\setminus V(M):|N_G(u)\cap D|=0\}$
is smallest possible.
For a contradiction, we may suppose that $B$ is non-empty.
Let
$C=\{ u\in V(G)\setminus V(M):|N_G(u)\cap D|=1\}.$
Let $b$ be a vertex in $B$.
Let $u_{-1}v_{-1}$, $u_0v_0$, and $u_1v_1$ in $M$ be such that $N_G(b)=\{ v_{-1},v_0,v_1\}$.
Since $D$ intersects each edge in $M$,
we have $u_{-1},u_0,u_1\in D$.
Since $G$ is claw-free, we may assume, by symmetry,
that $v_0$ and $v_1$ are adjacent,
which implies that $v_{-1}$ is not adjacent to $v_0$ or $v_1$.
Let $x$ be the neighbor of $v_{-1}$ distinct from $u_{-1}$ and $b$.
Since $G$ is claw-free, the vertex $x$ is adjacent to $u_{-1}$.
If $x=u_0$, then $u_0$ has no neighbor in $C$,
and exchanging $u_0$ and $v_0$ within $D$ reduces $|B|$,
which is a contradiction.
Hence, by symmetry between $u_0$ and $u_1$, the vertex $x$ is distinct from $u_0$ and $u_1$.
Since exchanging $u_1$ and $v_1$ within $D$ does not reduce $|B|$,
the vertex $u_1$ has a neighbor $c_1$ in $C$,
which is necessarily distinct from $x$.

Now, let
$\sigma:v_1,u_1,c_1,v_2,u_2,c_2,\ldots,v_k,u_k,c_k$
be a maximal sequence of distinct vertices from
$V(G)\setminus \{ u_{-1},u_0,v_{-1},v_0,b,x\}$
such that
$u_iv_i\in M$, $u_i\in D$, $c_i\in C$,
$u_i$ is adjacent to $c_i$ for every $i\in [k]$,
and $v_{i+1}$ is adjacent to $u_i$ for every $i\in [k-1]$.
Let $X=\{ u_{-1},u_0,v_{-1},v_0,b,x\}\cup \{ v_1,u_1,c_1,v_2,u_2,c_2,\ldots,v_k,u_k,c_k\}$,
and see Figure \ref{figseq} for an illustration.

\begin{figure}[H]
\begin{center}
\begin{tikzpicture}[scale=0.7]

\node[fill, circle, inner sep=1.3pt, label=left:{$x$}] (x) at (0,1) {};
\node[fill, circle, inner sep=1.3pt, label=below left:{$u_{-1}$}] (u-1) at (1,0) {};
\node[fill, circle, inner sep=1.3pt, label=above left:{$v_{-1}$}] (v-1) at (1,2) {};
\node[fill, circle, inner sep=1.3pt, label=above:{$b$}] (b) at (2,3) {};
\node[fill, circle, inner sep=1.3pt, label=below:{$u_0$}] (u0) at (2,0) {};
\node[fill, circle, inner sep=1.3pt] (v0) at (2,2) {};
\node[fill, circle, inner sep=1.3pt, label=below:{$u_1$}] (u1) at (3,0) {};
\node[fill, circle, inner sep=1.3pt, label=above right:{$v_1$}] (v1) at (3,2) {};
\node[fill, circle, inner sep=1.3pt, label=above left:{$u_2$}] (u2) at (5,2) {};
\node[fill, circle, inner sep=1.3pt, label=below right:{$v_2$}] (v2) at (5,0) {};
\node[fill, circle, inner sep=1.3pt, label=below left:{$u_3$}] (u3) at (7,0) {};
\node[fill, circle, inner sep=1.3pt, label=above right:{$v_3$}] (v3) at (7,2) {};
\node[fill, circle, inner sep=1.3pt, label=above left:{$u_4$}] (u4) at (9,2) {};
\node[fill, circle, inner sep=1.3pt, label=below right:{$v_4$}] (v4) at (9,0) {};

\node[fill, circle, inner sep=1.3pt, label=below:{$c_1$}] (c1) at (4,-1) {};
\node[fill, circle, inner sep=1.3pt, label=above:{$c_2$}] (c2) at (6,3) {};
\node[fill, circle, inner sep=1.3pt, label=below:{$c_3$}] (c3) at (8,-1) {};
\node[fill, circle, inner sep=1.3pt, label=above:{$c_4$}] (c4) at (10,3) {};

\draw (x) -- (u-1);
\draw (x) -- (v-1);
\draw[line width=0.6mm] (u-1) -- (v-1);
\draw[line width=0.6mm] (u0) -- (v0);
\draw (v0) -- (v1);
\draw[line width=0.6mm] (u1) -- (v1);
\draw (b) -- (v-1);
\draw (b) -- (v0);
\draw (b) -- (v1);

\draw (u1) -- (c1);
\draw (u1) -- (v2);
\draw (c1) -- (v2);
\draw[line width=0.6mm]  (v2) -- (u2);

\draw (u2) -- (c2);
\draw (u2) -- (v3);
\draw (c2) -- (v3);
\draw[line width=0.6mm] (v3) -- (u3);

\draw (u3) -- (c3);
\draw (u3) -- (v4);
\draw (c3) -- (v4);
\draw[line width=0.6mm] (v4) -- (u4);

\draw (u4) -- (c4);
\end{tikzpicture}
\end{center}
\caption{A subgraph of $G$ with vertex set $X$, where $k=4$.}
\label{figseq}
\end{figure}
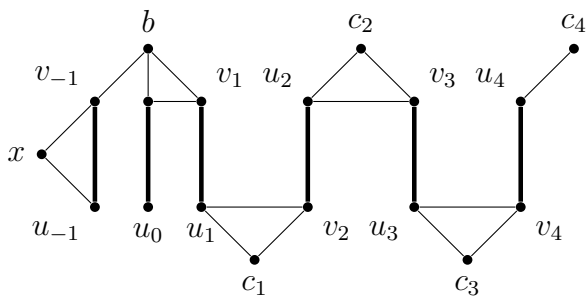
Let $v_{k+1}$ be the neighbor of $u_k$ distinct from $v_k$ and $c_k$.
Since $G$ is claw-free, the vertex $v_{k+1}$ is adjacent to $c_k$.
Since $V(G)\setminus V(M)$ is independent,
we have $u_{k+1}v_{k+1}\in M$ for some vertex $u_{k+1}$.
Since $c_k\in C$ and $u_k\in D$,
we obtain $v_{k+1}\not\in D$ and $u_{k+1}\in D$,
which implies that
the vertex $v_{k+1}$ does not belong to $X$.

If $u_{k+1}$ belongs to $X$, then $u_{k+1}=x$,
and replacing $D$ with
$$D'=(D\setminus \{ u_1,u_2,\ldots,u_{k+1}\})\cup \{ v_1,v_2,\ldots,v_{k+1}\}$$
reduces $|B|$, which is a contradiction.
Hence, the vertex $u_{k+1}$ does not belong to $X$.
If $u_{k+1}$ has a neighbor $c_{k+1}$ in $C$,
then, by the structural conditions,
the vertex $c_{k+1}$ does not belong to $X$,
and the sequence $\sigma$ can be extended by appending $v_{k+1},u_{k+1},c_{k+1}$,
contradicting its choice.
Hence, the vertex $u_{k+1}$ has no neighbor in $C$,
and replacing $D$ with the set $D'$ as above
again reduces $|B|$.
This final contradiction completes the proof.
\end{proof}
\noindent {\bf Acknowledgement} We thank Felix Joos 
for pointing out that Conjecture \ref{conjecture1}
holds for large values of $\Delta$.

\end{document}